\newtheorem{theorem}{Theorem}
\newtheorem{proposition}[theorem]{Proposition}
\newtheorem{corollary}[theorem]{Corollary}
\newtheorem{lemma}[theorem]{Lemma}
\renewcommand{\phi}{\varphi}                 
\renewcommand{\epsilon}{\varepsilon}
\renewcommand\Theta{\varTheta}
\newcommand\setm{\smallsetminus}
\renewcommand\bar\widehat
\begin{document}

\title{The Cycle Counts of Graphs}
\author{Ryan McCulloch}
\address{Department of Mathematics and Statistics, Binghamton University, Binghamton, NY 13902-6000, U.S.A.}
\email{rmccullo1985@gmail.com}
\author{Brendan D.\ McKay}
\address{School of Computing, Australian National University, Canberra, ACT 2601, Australia}
\email{brendan.mckay@anu.edu.au}
\author{Alireza Salahshoori}
\address{Department of Mathematics and Statistics, Binghamton University, Binghamton, NY 13902-6000, U.S.A.}
\email{asalahs1@binghamton.edu}
\author{Thomas Zaslavsky}
\address{Department of Mathematics and Statistics, Binghamton University, Binghamton, NY 13902-6000, U.S.A.}
\email{zaslav@math.binghamton.edu}

\date{November 26, 2025}

\begin{abstract}
We prove that an inseparable graph can have any positive number of cycles with the six exceptions 2, 4, 5, 8, 9, 16, and that an inseparable cubic graph has the additional exceptions 1 and 13.  The exceptions for simple inseparable cubic graphs are unknown.
\end{abstract}

\subjclass[2010]{05C38}
\keywords{Inseparable graph; Number of cycles; Cycle count;  Cubic graph; Planar graph; Hamiltonian graph}

\maketitle

A \emph{cycle count number} is a positive integer that is the number of cycles in some inseparable graph.
The principal result of this paper is as follows.

\begin{theorem}\label{conjecture}
The only positive integers that are not cycle count numbers are $2$, $4$, $5$, $8$, $9$, and $16$.
\end{theorem}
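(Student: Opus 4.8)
The plan is to split the statement into a \emph{nonexistence} half (none of $2,4,5,8,9,16$ is a cycle count) and a \emph{realization} half (every other positive integer is), organizing both around the dimension $r=|E|-|V|+1$ of the cycle space of an inseparable graph $G$, which I call its \emph{cycle rank}. The whole argument rests on sandwiching the number of cycles $c(G)$ between two bounds depending only on $r$. The upper bound $c(G)\le 2^{r}-1$ is immediate: each cycle is a distinct nonzero vector of the $r$-dimensional cycle space over the two-element field, and there are only $2^{r}-1$ of those. The lower bound $c(G)\ge\binom{r+1}{2}$, with equality for generalized theta graphs (two vertices joined by $r+1$ internally disjoint paths, the bond $B_{r+1}$ being the case of $r+1$ parallel edges), I would establish by induction on $r$: in an inseparable graph of rank $r\ge 2$ one deletes an edge $e$ lying on at least $r$ cycles while keeping the graph inseparable of rank $r-1$, so that $c(G)=c(G-e)+(\text{cycles through }e)\ge\binom{r}{2}+r=\binom{r+1}{2}$.

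The nonexistence half then falls out of the sandwich by tabulating the admissible windows $[\binom{r+1}{2},\,2^{r}-1]$: rank $1$ gives $\{1\}$, rank $2$ gives $\{3\}$, rank $3$ gives $[6,7]$, rank $4$ gives $[10,15]$, rank $5$ gives $[15,31]$, and so on. The integers $2$, $4$, $5$, $8$, $9$ fall strictly between consecutive windows and so are never attained. The value $16$ is genuinely different: it exceeds the rank-$4$ maximum $15$ yet lies below the rank-$6$ minimum $21$, so it can only occur at rank $5$. I would therefore classify all inseparable graphs of cycle rank $5$: suppressing degree-two vertices leaves the cycle count unchanged and produces a multigraph of minimum degree at least $3$ with $|E|=|V|+4$, forcing $|V|\le 8$; this is a finite list of topological types, and I would compute the cycle count of each and verify that $16$ never appears. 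I expect this finite classification to be the single hardest and most labor-intensive step.

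For the realization half I would first dispose of the small values by exhibiting explicit inseparable graphs: a cycle for $1$, the theta graph for $3$, the bonds $B_{4},B_{5},B_{6}$ for $6,10,15$, the complete graph $K_{4}$ for $7$, the triangle of doubled bonds $T(2,2,2)$ for $11$, $K_{4}$ with one edge doubled for $12$, the wheel $W_{4}$ for $13$, the triangular prism for $14$, and small edge-multiplied variants of $K_{4}$, $W_{4}$, the prism, and $T(2,2,3)$ for $17,18,19,20$; this covers every admissible value up to $21$. For all larger $N$ I would proceed by induction, using a single engine: replacing an edge that lies on exactly $q$ cycles by two parallel edges raises the cycle count by exactly $q+1$ (the $q$ rerouted cycles plus one new digon). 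Given $N$, I would locate an already-realized value $N'<N$ and an inseparable graph attaining it that possesses an edge on exactly $N-N'-1$ cycles, and double that edge; the bounds above keep the possible increments under control and let the induction reach every integer beyond $16$.

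The obstacle is twofold. First, excluding $16$ is delicate exactly because, unlike $2,4,5,8,9$, it is not forced out by the rank window and must be killed by the finite rank-$5$ classification. Second, in the realization half the difficulty is that the most natural inseparable families---fans, books, ladders, generalized theta graphs---all have \emph{triangular} cycle counts, while any family with a genuine branch vertex acquires a multiplicative term and therefore skips integers; hitting \emph{every} large $N$ thus forces the edge-doubling induction, whose real content is verifying that suitable edge multiplicities always exist so that no residual gap survives above the exceptional value $16$.
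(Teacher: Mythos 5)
The nonexistence half of your plan is workable and genuinely different in organization from the paper's: the sandwich $\binom{r+1}{2}\le c(G)\le 2^r-1$ on the cycle count of an inseparable graph of cycle rank $r$ cleanly kills $2,4,5,8,9$, and your reduction of $16$ to a finite classification of rank-$5$ graphs is sound (the paper performs the equivalent classification, organized instead by ear additions and pruned by an extremality lemma). Two details still need to be supplied: in the lower-bound induction you must delete a whole ear (an edge of the suppressed multigraph), not an arbitrary edge, since removing one edge of a longer ear destroys inseparability; and the assertion that this ear lies on at least $r$ cycles is a lemma in its own right (in an inseparable graph of rank $\rho$, any two vertices are joined by at least $\rho+1$ paths), provable by induction along an ear decomposition. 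These are fixable, and the rank-$5$ enumeration, while laborious, is finite.

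The realization half, however, has a genuine gap, and it sits exactly where the theorem is hard. Your inductive step requires, for every $N\ge 21$, an already-realized $N'<N$ together with a graph attaining $N'$ that has an edge lying on exactly $N-N'-1$ cycles. You give no argument that such a pair exists, and none of your stated tools can supply one: the rank sandwich constrains $c(G)$, but says nothing about which values $q$ (cycles through an edge) occur among graphs attaining a given count, which is precisely what the induction consumes, so the claim that ``the bounds keep the increments under control'' does no work. The inventory of available increments is an empirically growing set with no inductive structure, so the argument never closes; moreover your engine only produces graphs containing a doubled edge (parallel ears), and nothing rules out a target $N$ attainable only by graphs without that feature. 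The difficulty is not hypothetical: the paper's own proof needs a substantial external theorem (Czabarka--Sz\'ekely--Wagner, realizing every integer outside a specific $34$-element list as the subtree count of a tree, transferred to cycle counts via outerplanar graphs and their inner duals), and even with that theorem in hand it resorts to a computer search to produce graphs for the $22$ residual values $22,23,26,\dots,89$. To repair your plan you would need a parametrized family of multigraphs whose cycle counts provably cover all sufficiently large integers --- this is the real content that the Czabarka--Sz\'ekely--Wagner theorem supplies in the tree setting --- together with a finite (likely computer-assisted) check below that threshold; the edge-doubling heuristic by itself is a search strategy, not a proof.
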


Since the number of cycles is unchanged upon subdividing edges, or the reverse operation, Theorem~\ref{conjecture} is equally true for multigraphs and simple graphs.
In this article we write ``graph'' to allow multiple edges but not loops, and ``simple graph'' when neither  loops nor multiple edges are allowed.  We will only consider \emph{inseparable} graphs, which are those without cutpoints.  For loop-free graphs with more than two vertices, inseparability is the same as 2-connectivity.  The following is an
example of a theorem that is true for graphs but not for simple graphs (see Section~\ref{s:conjectures}). 
    
\begin{theorem}\label{cubic}
The only positive integers that are not cycle counts of inseparable cubic graphs are $1$, $2$, $4$, $5$, $8$, $9$, $13$, and $16$.
The same positive integers are the only ones that are not cycle counts of inseparable planar cubic graphs.
\end{theorem}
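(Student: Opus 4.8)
The plan is to prove both sentences at once by using \emph{only planar graphs} throughout the realization; the planar statement is then automatic, and the real content is to pin down the excluded set exactly.

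\smallskip
\emph{Exclusions.} Each of $2,4,5,8,9,16$ is already excluded by Theorem~\ref{conjecture}, since a value that is not a cycle count at all cannot be a cubic cycle count. Only $1$ and $13$ remain. An inseparable cubic graph on $2k$ vertices has $3k$ edges, hence cyclomatic number $3k-2k+1=k+1\ge 2$; being $3$-regular it is not a single cycle, so it contains a subdivided theta and therefore at least three cycles, which rules out $1$ (and re-confirms $2$). The subtle case is $13$, which I expect to be the main obstacle on this side. My key tool would be a lower-bound lemma: \emph{an inseparable graph with cyclomatic number $\mu$ has at least $\binom{\mu+1}{2}$ cycles}, with equality for the generalized theta graph $\Theta_{\mu+1}$. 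Applied with $\mu=k+1$ this gives at least $\binom{k+2}{2}$ cycles on $2k$ vertices, namely $3,6,10,15,\dots$ for $k=1,2,3,4$. Thus a graph with only $13$ cycles must have $2k\le 6$; since the values for $2k\le 4$ lie in $\{3\}\cup\{6,7\}$, such a graph must have exactly six vertices. The argument then finishes with a finite enumeration of the inseparable cubic multigraphs on six vertices, checking that they realize $10,11,12,14,15$ (and larger values) but never $13$.

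\smallskip
\emph{Realization of small values.} For each admissible $m$ I must display a planar inseparable cubic graph with exactly $m$ cycles, in the spirit of Figure~\ref{10-17}. The theta graph gives $3$; two linked digons give $6$; $K_4$ gives $7$; explicit six-vertex multigraphs give $10,11,12$; the triangular prism gives $14$; and three digon-paths joining two hubs give $15$. These, together with a handful of sporadic graphs covering the first window (for instance $17,18,19,20$, where the systematic surgery below is still irregular), clear everything beneath the range where a single infinite family can take over.

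\smallskip
\emph{Realization of large values.} The engine I would use is the \emph{doubled ladder} $D_m$: the $2\times m$ grid with its two extreme rungs doubled into digons. It is planar, cubic and inseparable, and because every cycle of a $2\times m$ grid is the boundary of a contiguous block of cells, a direct count yields exactly $\binom{m+2}{2}$ cycles; in particular $D_m$ attains the lower bound above and so is extremal. The doubled ladders alone realize only the triangular numbers $15,21,28,36,\dots$, so the crux --- and the step I expect to be hardest --- is to fill each window $[\binom{m+2}{2},\binom{m+3}{2})$ with consecutive admissible values. I would attempt this by local surgery on $D_m$ (replacing an end cap, or inserting a short digon-path on a carefully chosen edge that lies on few cycles) so as to push the count up in small, controlled increments, supplemented by a second modified family so that together they cover every residue and hence every sufficiently large integer. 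Verifying that these perturbations stay planar, cubic and inseparable while sweeping out all intermediate values --- and naturally skipping the non-value $16$ --- is where the main difficulty lies; the finitely many residual cases are then checked directly.
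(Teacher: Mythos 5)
Your exclusion argument is sound in outline and genuinely different from the paper's (the paper simply reuses its small-count case analysis for Theorem~\ref{conjecture}: among all inseparable graphs, a count of 13 occurs only for $\bar{W}_4$, whose hub has degree 4, so no cubic graph achieves it), but as written it rests on two steps you do not supply: the lower bound $c(G)\ge\binom{\mu+1}{2}$ for an inseparable graph of cyclomatic number $\mu$, and the enumeration of six-vertex inseparable cubic multigraphs. Both are repairable: the lower bound follows by ear induction from Lemma~\ref{ear-path} once one shows, via Menger's theorem, that any two vertices of an inseparable graph of cyclomatic number $\mu$ are joined by at least $\mu+1$ paths, and the enumeration is a finite check whose outcome ($10,11,12,14,15$ but never $13$) is consistent with the paper's Figures~\ref{K4ear} and~\ref{3-ears}. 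So this half is a legitimate, if longer, alternative route.

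The realization half, however, has a genuine gap, and it is the heart of the theorem. Your doubled ladder $D_m$ is correct as far as it goes (it is planar, cubic, inseparable, with exactly $\binom{m+2}{2}$ cycles), but it produces only those values; the window between consecutive members has length $m+2$, growing linearly with $m$, so your ``local surgery'' would have to supply, for every $m$, modifications achieving every increment $1,2,\dots,m+1$ while preserving planarity, cubicity and inseparability. No such mechanism is given, and none is routine: the increment produced by a bounded local modification (say, inserting a digon on an edge $e$) depends on the number of cycles through $e$, and arranging an unbounded family of such surgeries whose counts sweep out exactly the consecutive integers in each window is essentially the original problem restated, not a reduction of it. The paper closes exactly this gap with a different idea: an inseparable outerplanar graph whose inner dual is a tree $T$ has exactly as many cycles as $T$ has subtrees (Theorem~\ref{prop:cor}), the chords in Lemma~\ref{inner} can be chosen pairwise disjoint so that the resulting graph is cubic and planar, and the theorem of Czabarka, Sz\'ekely and Wagner \cite{csw} states that every positive integer outside an explicit finite list ending at 89 is the subtree count of some tree; a computer search then produces planar cubic graphs for the finitely many remaining admissible values. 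Without an analogue of that external input---or an actual construction filling every window---your proposal does not prove either sentence of the theorem.
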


The numbers in Theorems~\ref{conjecture} and~\ref{cubic} are sequences A385523 and A385524 in the Online Encyclopedia of Integer Sequences \cite{oeis}.

The proof is in three parts.  The first part is a detailed analysis of graphs with small cycle counts, proving nonexistence of the exceptional cases.  The second part establishes a connection to subtree counts of  trees, enabling us to prove all cases except a finite list ending at 89.  The third part presents the results of a computer search that found graphs for all numbers in that finite list.

Let $c(G)$ be the number of cycles in $G$.  This number depends only on the homeomorphism type of $G$, so we may freely introduce new vertices by subdividing edges, or execute the reverse operation of suppressing divalent vertices.  
We denote by $\bar{G}$ any graph that is homeomorphic to $G$ (i.e., they have isomorphic subdivisions). 

An \emph{ear} in a graph is a path (with distinct endpoints) of positive length whose internal vertices, if any, have degree 2, and whose endpoints have degree greater than 2.
When adding an ear, we allow ourselves to subdivide edges to become the endpoints of the ear.

Our approach will use the following famous theorem of Whitney.
\begin{theorem}[{\cite[Theorem 19]{Whitney}}]\label{whitney}
Adding an ear to an inseparable graph gives an inseparable graph.
Moreover, every inseparable graph with more than two vertices
is a cycle or can be obtained from a cycle by successively adding ears.
\end{theorem}
In particular, as we construct inseparable graphs by adding ears, we do not need to 
add ears to graphs that are separable.

\section{Proof for numbers from 1 to 19}

First, we prove Theorem~\ref{conjecture} is true for numbers from 1 to 3.  For $c(G)=1$, $G$ is a cycle.  For higher $c(G)$ we must add an ear, which gives a theta graph.  The theta graph has 3 cycles.  Thus, 2 is impossible and 3 is possible.

Next we state a valuable though simple lemma whose correctness is obvious.

\begin{lemma}[Ear-Path Lemma]\label{ear-path}
Let $G$ be a graph with vertices $v$ and $w$.  If an ear is added to $G$ with endpoints $v$ and $w$, the number of new cycles created equals the number of $vw$-paths in $G$.
\end{lemma}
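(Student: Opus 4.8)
The plan is to set up a bijection between the new cycles and the $vw$-paths of $G$. Write $P$ for the added ear and $G'=G\cup P$ for the resulting graph, and call a cycle of $G'$ \emph{new} if it is not already a cycle of $G$, equivalently if it uses at least one edge of $P$. The decisive structural fact I would record first is that every internal vertex of $P$ has degree $2$ in $G'$, so a cycle of $G'$ that meets any edge of $P$ is forced to use both edges at each internal vertex it reaches and must therefore traverse all of $P$ from one endpoint to the other. Hence a new cycle $C$ contains $P$ in its entirety.

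Next I would define the two maps. Given a new cycle $C$, deleting the edges and internal vertices of $P$ from $C$ leaves an arc $Q$ in $G$ joining $v$ to $w$; since $C$ is a cycle and the internal vertices of $P$ are new vertices lying outside $G$, this arc $Q$ is a simple $vw$-path in $G$. Conversely, given any $vw$-path $Q$ in $G$, the union $P\cup Q$ is a cycle of $G'$: because the internal vertices of $P$ do not belong to $G$, the paths $P$ and $Q$ meet exactly in their common endpoints $v$ and $w$, so gluing these two internally disjoint $vw$-paths produces a cycle, which is new since it uses $P$. The two assignments $C\mapsto Q$ and $Q\mapsto P\cup Q$ are mutually inverse, so they give the claimed bijection, and hence the number of new cycles equals the number of $vw$-paths in $G$.

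Finally I would dispose of the bookkeeping at the endpoints. The convention permits subdividing edges of $G$ in order to create $v$ and $w$ as endpoints of the ear; since subdivision alters neither the cycle count nor the number of $vw$-paths, one may assume $v,w\in V(G)$ from the outset and apply the argument verbatim. I expect no real obstacle: the single point needing care is the claim that a new cycle runs through the whole ear, which is precisely where the degree-$2$ hypothesis on internal vertices is used, and everything else is a routine internal-disjointness check. Multiple edges and loops are handled automatically, since parallel $vw$-edges count as distinct $vw$-paths while a loop, being closed, is never a $vw$-path when $v\neq w$.
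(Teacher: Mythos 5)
Your proof is correct; note that the paper itself gives no argument at all for this lemma, stating only that its ``correctness is obvious,'' and your bijection --- forcing a new cycle to traverse the whole ear via the degree-$2$ internal vertices, then pairing it with the $vw$-path it leaves behind in $G$ --- is precisely the routine verification that claim of obviousness stands for. So the proposal matches the paper's (implicit) reasoning, and your extra care about subdivided endpoints, parallel edges, and loops is consistent with the paper's conventions.
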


Thus, if $G$ is inseparable, then by Menger's Theorem adding an ear must create at least two new cycles.

The notation $G/S$ means the contraction of $G$ by $S \subseteq E$,
while $G\setm P$ means $G$ with the edges and internal vertices of the
ear $P$ (but not its endpoints) removed.  A \emph{minor} of $G$ is any contraction of a subgraph.
If $v,w$ are two vertices of an ear $P$, then $P_{vw}$ denotes the subpath
of $P$ whose endpoints are $v$ and~$w$.

\begin{proposition}[Isotonicity]\label{minors}
Let $G$ be inseparable with $P$ an ear of $G$.  Then 
\begin{enumerate}
\item [\rm{(a)}] $c(H) \leq c(G)$ for every minor $H$ of $G$, 
\item [\rm{(b)}] $c(H) < c(G)$ for every proper subgraph $H$ of $G$, and
\item [\rm{(c)}] $c(G/P) = c(G)$ if and only if $G \setm P$ is separable.
\end{enumerate}
\end{proposition}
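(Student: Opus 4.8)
The plan is to handle the three parts in increasing order of difficulty, with (c) carrying the real content; throughout write $v,w$ for the endpoints of the ear $P$ and set $J = G\setm P$. For part (a), I would observe that every minor arises from $G$ by a sequence of edge deletions, vertex deletions, and edge contractions, so it suffices to show that no single such step increases the cycle count. Deletions obviously destroy cycles and create none. For a contraction $G/e$ with $e=uv$, the plan is to build an injection from the cycles of $G/e$ into the cycles of $G$: a cycle avoiding the contracted vertex $z$ is already a cycle of $G$, while a cycle through $z$ lifts, according to whether its two edges at $z$ both emanate from $u$, both from $v$, or one from each, either to a cycle of $G$ through $u$ or through $v$, or to a $uv$-path $Q$, in which case we send it to $Q\cup e$. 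These images are distinguished by which of $u,v$ they meet and whether they contain $e$, so the map is injective and $c(G/e)\le c(G)$.

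For part (b), every proper subgraph omits some edge $e$: omitting a vertex forces omitting an incident edge, since an inseparable graph has minimum degree at least $2$, and the one-vertex one-loop graph is checked directly. Because $G$ is inseparable, every edge lies on a cycle, which is immediate from the ear decomposition recalled above, so $G\setm e$ loses at least the cycles through $e$ while retaining all others. Hence, by part (a) applied to $H\subseteq G\setm e$, we get $c(H)\le c(G\setm e)<c(G)$.

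Part (c) I would attack through an exact cycle-count identity. The cycles of $G$ split into those avoiding $P$, which are precisely the cycles of $J$, and those using $P$, which must traverse all of $P$ and return by a $vw$-path in $J$; by Lemma~\ref{ear-path} this gives $c(G)=c(J)+p$, where $p$ is the number of $vw$-paths in $J$. Contracting $P$ identifies $v$ and $w$ to a single vertex $z$, so $G/P$ is just $J$ with $v,w$ merged. A lifting analysis like that of part (a) shows that the cycles of $G/P$ correspond exactly to the cycles of $J$ avoiding $\{v,w\}$, the cycles of $J$ through exactly one of $v,w$, and the $vw$-paths of $J$ (which close up through $z$); the only cycles of $J$ that fail to survive are the $d$ cycles passing through both $v$ and $w$, which become a pair of cycles meeting at $z$ rather than a single cycle. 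Hence $c(G/P)=c(J)-d+p$, and subtracting yields the clean formula $c(G)-c(G/P)=d$.

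It then remains to prove that $d=0$ if and only if $J$ is separable. One direction is easy: if $J$ is inseparable it is $2$-connected on at least two vertices, so $v$ and $w$ lie on a common cycle and $d\ge 1$. The converse is the main obstacle. Assuming a cycle $C\subseteq J$ through both $v$ and $w$, I would show $J$ is inseparable by contradiction, exploiting that the ear $P$ meets $J$ only at $v,w$, so any separation of $J$ must survive into $G$. Concretely, for every vertex $x$ the vertices $v$ and $w$ lie in the same component of $J-x$ (delete $x$ from the cycle $C$), so a cutpoint $x$ of $J$ would leave a component $K'$ of $J-x$ avoiding both $v$ and $w$; since $P$ attaches only at $v,w$, this $K'$ stays separated in $G-x$, making $x$ a cutpoint of $G$ and contradicting inseparability. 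A disconnection of $J$ is excluded the same way, its extra component avoiding $v,w$ and hence persisting in $G$. This forces the equivalence $d\ge 1\iff J$ inseparable, that is, $d=0\iff J$ separable, which with the identity $c(G)-c(G/P)=d$ completes (c).
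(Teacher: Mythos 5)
Your proposal is correct and follows essentially the same route as the paper: both analyze the effect of edge deletion and (ear) contraction on the cycle count, identify the contraction deficit as the number of cycles through both endpoints $v,w$ that avoid $P$, and characterize the vanishing of that deficit by the separability of $G \setm P$. The only differences are ones of detail rather than of method: where the paper invokes Menger's theorem and asserts tersely that a cutpoint separating $v$ from $w$ is the only way $G \setm P$ can be separable, you spell out the lifting injection for contractions in part (a) and give a direct component argument for part (c) (any cutpoint or disconnection of $G \setm P$ that does not separate $v$ from $w$ would persist in $G$), making your write-up more self-contained.
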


\begin{proof}
Deleting an edge $e$ from $G$ reduces $c(G)$ by the number of cycles on $e$, which is positive because $G$ is inseparable.

Contracting an ear $P$ with endpoints $v$ and $w$ reduces $c(G)$ by the number of cycles $C$ that contain both $v$ and $w$ but not $P$.  This number is positive if and only if there is no cutpoint separating $v$ and $w$ in $G \setm P$.  The cutpoint or cutpoints make $G \setm P$ separable.  There is no such cutpoint if and only if $G\setm P$ is inseparable.
\end{proof}

We prove a lemma that tells us how to get the most or fewest cycles by adding an ear.  A vertex is \emph{divalent} if its degree is 2, and \emph{multivalent} if it has degree greater than~2.  Note that an inseparable $G$ will have at least two multivalent vertices unless it is a cycle.

\begin{lemma}[Extremality Lemma]\label{extrem}
Considering all subdivisions $\bar{G}$ of an inseparable graph $G$ that is not a cycle:

{\rm(a)}  The maximum number of cycles obtained by adding an ear to
$\bar{G}$ is achieved (perhaps not uniquely) if the ear is added to
a pair of divalent vertices of~$\bar{G}$ in different ears of $\bar{G}$.

{\rm(b)} The minimum number of cycles obtained by adding an ear to
$\bar{G}$ is achieved (perhaps not uniquely) if the ear is added to
some pair of multivalent vertices of~$\bar{G}$.

{\rm(c)} Adding a new ear joining any two vertices in one ear of $\bar{G}$ gives the same number of cycles as adding the new ear joining the endpoints of that ear.
\end{lemma}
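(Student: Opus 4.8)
The plan is to translate the whole statement into a question about counting paths and then to understand how that count responds to moving an endpoint along an ear. By the Ear-Path Lemma (Lemma~\ref{ear-path}), adding an ear with endpoints $v,w$ to $\bar G$ produces exactly $p(v,w)$ new cycles, where $p(v,w)$ is the number of $vw$-paths in $\bar G$; since $c(\bar G)=c(G)$ is constant, maximizing or minimizing the resulting cycle count is exactly maximizing or minimizing $p(v,w)$ over the choice of $v$ and $w$. (Because $c$ is unchanged by subdivision, I may freely subdivide to create whatever divalent interior vertices I need, so every placement under discussion is available in some $\bar G$.) Thus the lemma becomes a purely combinatorial statement about the function $p$.

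First I would prove a basic decomposition. Fix an ear $P$ with multivalent endpoints $a,b$, let $v$ be a divalent interior vertex of $P$, and let $w\notin\operatorname{int}P$; write $p_{-P}(\cdot,\cdot)$ for the number of paths avoiding $\operatorname{int}P$. Splitting each $vw$-path according to whether its first step from $v$ heads toward $a$ or toward $b$, and using that a path cannot re-enter the divalent interior of $P$, gives $p(v,w)=p_{-P}(a,w)+p_{-P}(b,w)$. In particular this is independent of the position of $v$ inside $P$, so sliding a divalent endpoint along its ear never changes the cycle count, provided the other endpoint stays off that ear.

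Next comes the monotonicity comparison with a multivalent endpoint. An $aw$-path either avoids $\operatorname{int}P$ or runs $a\to b$ through $P$ and then continues while avoiding both $\operatorname{int}P$ and $a$, so $p(a,w)=p_{-P}(a,w)+p'_{-P}(b,w)$, where the prime records the additional constraint of avoiding $a$ and hence $p'_{-P}(b,w)\le p_{-P}(b,w)$. Therefore $p(a,w)\le p(v,w)$: an interior (divalent) endpoint gives at least as many cycles as a multivalent one, and symmetrically a multivalent endpoint gives at most as many. The same splitting applied to two vertices $v,w$ in a common ear $P$ yields $p(v,w)=1+p_{-P}(a,b)=p(a,b)$, which is exactly part~(c).

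It remains to assemble the three facts. For (b), I start from any placement and repeatedly move an endpoint from the interior of its ear to a multivalent endpoint of that ear; by the monotonicity inequality this never increases $p$ (a same-ear pair being handled outright by (c)), and once one coordinate sits at a multivalent vertex the hypothesis needed to move the second is automatic, so the minimum is attained at a multivalent pair. For (a) I run the symmetric argument, pushing each endpoint into an ear's interior without decreasing $p$; the one extra point is the word \emph{different}. If both endpoints land in a single ear $P$, then (c) gives value $p(a,b)$, and because $a$ and $b$ each have degree at least three I can again apply monotonicity to push the endpoint at $a$ into an incident ear $Q_a$ and the endpoint at $b$ into an incident ear $Q_b\neq Q_a$, never decreasing $p$ and landing on a different-ear divalent pair. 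The delicate part, which I expect to be the main obstacle, is the path bookkeeping under the decomposition: I must verify rigorously that a path leaving the interior of an ear cannot return to it, and must carry the ``avoid $a$'' constraint that turns the multivalent comparison into the inequality $p(a,w)\le p(v,w)$ rather than an identity; the rest is careful choice, at each push, of an incident ear that avoids the current position of the other endpoint, which degree at least three always permits.
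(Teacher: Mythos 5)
Your proof is correct, but it reaches the key monotonicity by a different tool than the paper. The paper's proof never counts paths: it realizes the move of an ear-endpoint $v$ along its ear $P$ toward an end $x$ as the contraction of $P_{xv}$, so Proposition~\ref{minors}(a) (a minor never has more cycles) immediately gives the inequality behind part (b), and Proposition~\ref{minors}(c) (contracting an ear preserves the count exactly when the rest is separable, $w$ being the cutpoint) gives part (c); part (a) is then assembled by the same push-the-endpoints argument you use. You instead reduce everything to Lemma~\ref{ear-path} and prove the monotonicity by an explicit decomposition of $vw$-paths according to the end of $P$ through which they leave the ear, with the ``avoid $a$'' correction giving $p(a,w)=p_{-P}(a,w)+p'_{-P}(b,w)\le p(v,w)$. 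Both arguments ultimately rest on the same geometric fact, which you correctly flag and verify: the divalent interior of an ear can be entered only at its two ends and can only be traversed end-to-end, so a path that leaves it cannot return. What each route buys: the paper's is shorter because Proposition~\ref{minors} already packages both the inequality and the exact equality criterion needed for (c); yours is more self-contained and quantitative, since it exhibits precisely the deficit $p_{-P}(b,w)-p'_{-P}(b,w)$ (the cycles lost by sitting at a multivalent vertex) and derives (c) as an identity $p(v,w)=1+p_{-P}(a,b)=p(a,b)$ rather than through a separability test. Your assembly is also slightly more careful than the paper's at one point: in part (a) the paper says only ``move it to a divalent vertex on an adjacent ear,'' while you make explicit that the incident ears $Q_a\neq Q_b$ must be chosen to avoid the other endpoint's current position, which degree at least three always permits.
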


\begin{proof}
Suppose the ear is attached to vertices $v$ and $w$ of $\bar{G}$
and that $v$ is a divalent vertex of $\bar{G}$.
If $P$ is the ear of $\bar{G}$ containing $v$ and $x$ is an end of $P$ such that
the part $P_{xv}$ of $P$ from $x$ to $v$ does not include $w$, then 
contracting that part of $P$ effectively moves $v$ to $x$ and,
by Proposition~\ref{minors}(a) does not increase the number of cycles.
This implies part (b).

To prove (c), suppose that $v$ and $w$ lie on the same ear $P$ of $\bar{G}$,
but $v$ is not at the end.  As before, contract $P_{xv}$.
Since $\bar{G}\setm P_{xv}$ is separable, $w$ being a cutpoint,
the number of cycles is unchanged, by Proposition~\ref{minors}(c).

Now we return to part (a). By the proof of part (b), we can assume $v$ and $w$
are divalent vertices.  If they lie on different ears of $\bar{G}$, we are done.
If not, by part (c) we can move $v$ to $x$ without changing the
cycle count.  Then by the proof of part (b) we can move it to a divalent
vertex on an adjacent  ear
without decreasing the cycle count.  This completes the proof.
\end{proof}

A simple example with large $c(G)$ is a graph $\Theta_k$ consisting of $k$ internally disjoint paths between two vertices.  It has exactly $\binom{k}{2}$ cycles.  The case $k=3$ is a theta graph.

Now we prove Theorem~\ref{conjecture} for numbers 4 through 7.
To have more than 3 cycles we must add an ear to a theta graph. 
(Recall from Theorem~\ref{whitney} that we don't need to consider
adding ears to separable graphs.)
The four possible graphs are shown in Figure~\ref{2-ears}.
They have 6 cycles, except for $K_4$ which has 7 cycles.
This also shows that counts 4 and 5 are impossible.

\begin{figure}[htbp]
\begin{center}
\includegraphics[scale=1.25]{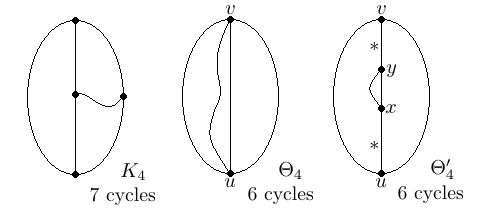}
\caption{The two-ear graphs obtained by adding an ear to a theta graph.  An asterisk $*$ denotes a path that may have length 0.}
\label{2-ears}
\end{center}
\end{figure}

\begin{figure}[htbp]
\begin{center}
\includegraphics[scale=1.25]{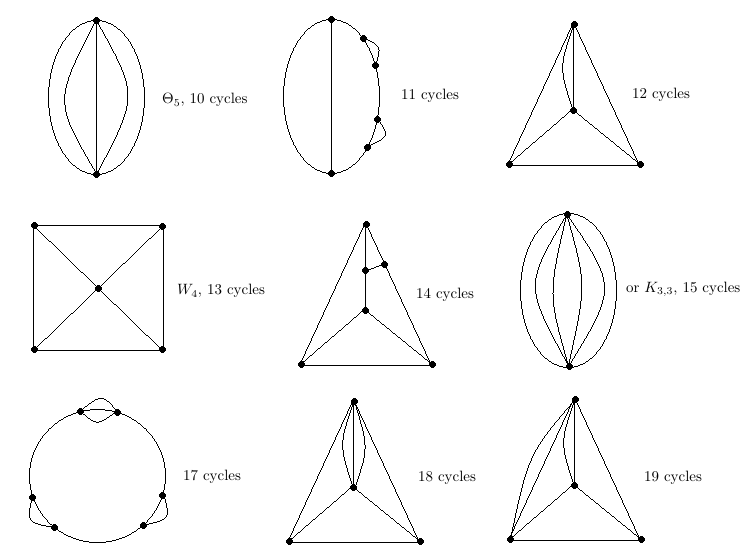}
\caption{Graphs that prove 10, \dots, 15 and 17, 18, 19 are cycle count numbers.}
\label{10-17}
\end{center}
\end{figure}

We now prove Theorem~\ref{conjecture} for cycle counts 8 through 19.
In Figure \ref{10-17} we show graphs that have 10 through 19 cycles, excepting 16.  We now prove that 8 and 9 are not cycle count numbers, as well as establishing which graphs have 10 to 15 cycles.
To get more than 7 cycles we add two ears, called ears three and four, to a 2-ear graph.  The 2-ear graphs are the $\Theta_4'$ graphs and $\bar{K}_4$; they get separate treatment.

We begin with an ear added to $\bar{K}_4$.  Between any two vertices of $K_4$ there are 5 paths so the smallest number of new cycles will be 5, giving 12 cycles.  For the same reason adding a fourth ear will give at least 5 more cycles for a total of 17 or more (exactly 18 if we add a third ear to the double ear in the first diagram in Figure \ref{K4ear}; see Figure \ref{10-17}); in other words, adding two ears to $\bar{K}_4$ cannot give fewer than 17 cycles.  On the other hand, the most new cycles arise from a third ear between internal vertices of two ears of $\bar{K}_4$, which can be done between ears that either do or do not share an endpoint.  The first type yields 14 cycles and the second creates a $\bar{K}_{3,3}$ with 15 cycles.  This establishes that a third ear on $\bar{K}_4$ can yield only between 12 and 15 cycles but a fourth ear yields more than 5 for a total of more than 16 cycles.  The 1-ear extensions of $\bar{K}_4$, with cycle counts from 12 to 15, are shown in Figure~\ref{K4ear}.

\begin{figure}[htbp]
\begin{center}
\includegraphics[scale=1.25]{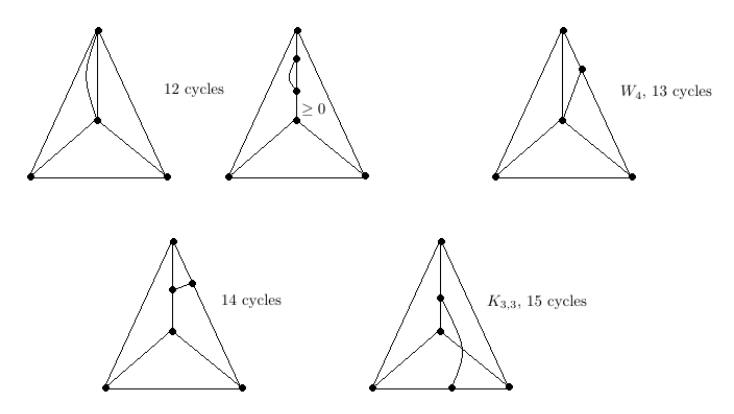}
\caption{The 1-ear extensions of $\bar{K}_4$ and their cycle counts.}
\label{K4ear}
\end{center}
\end{figure}

Adding an ear on the left-most ear of the upper left graph in Figure \ref{K4ear} adds 7 cycles, by Lemma \ref{ear-path}, giving a count of 19 (see Figure \ref{10-17}).

Adding one or two extra ears to a $\Theta_4'$ in a way that creates a
$\bar{K}_4$ subgraph only makes graphs we have already constructed from~$\bar{K}_4$,
so we only need to consider ears that don't create a $\bar{K}_4$ subgraph.

First we add ear three.  The $\Theta_4'$ is built on a $\Theta$, whose constituent paths we call $P, Q, R$, by placing ear two on one of the paths, say $P$.  Let $P = P_{ux} P_{xy} P_{yv}$, where $P_{ux}$ and $P_{yv}$ may have length 0.  (See Figure \ref{2-ears}.)  Each further ear must also be on one of $P$ as in Figure \ref{3-ears}(a2, b) or $Q$ as in Figure \ref{3-ears}(a1), or on ear two, which is equivalent to being on $P$ \emph{nested} inside ear two as in Figure \ref{3-ears}(a2).
In Figure \ref{3-ears}(a) we get $\binom{4}{2} = 10$ cycles because its cycle count is the same as that of $\Theta_5$ by Lemma \ref{extrem}(c).  In Figure \ref{3-ears}(b) we have a necklace of three cycles (it is the upper middle graph in Figure \ref{10-17}); this graph has $11$ cycles.  
We note that adding an ear on a previously added ear does not give any different graphs from these.

\begin{figure}[htbp]
\begin{center}
\includegraphics[scale=1.25]{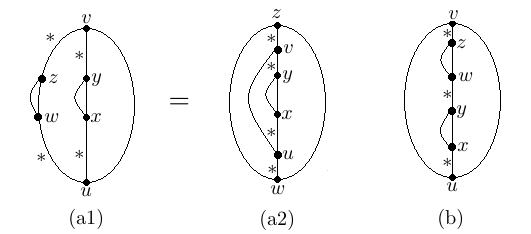}
\caption{The graphs show all ways to add a third ear to $\Theta_4'$ in Figure \ref{2-ears} without creating a $\bar{K}_4$ subgraph.  The labels in (a2) show how it is the same as (a1).  An asterisk $*$ denotes a path that may have length 0.}
\label{3-ears}
\end{center}
\end{figure}

Now we add ear four.  The four essentially different ways to do so are shown in Figure \ref{fourears}.
Graphs (b) show all ways to have ears two, three, and four on the same constituent path of the theta graph; graphs (a) show the other ways.
The four types are represented by (a1--a3) and (b1).
By Lemma \ref{extrem}(c) graphs (a1) and (a3) have the same cycle count as $\Theta_6$, that is, $\binom{6}{2} = 15$.
Graph (b1) is a necklace of four 2-cycles, with a total of 20 cycles.
Graph (a2) has 17 cycles (in fact by Lemma \ref{extrem}(c) it has the same cycle count as the bottom left graph in Figure~\ref{10-17}). 
It is easy to verify that no way of adding ears on ears gives any other graphs.

\begin{figure}[htbp]
\begin{center}
\includegraphics[scale=1.25]{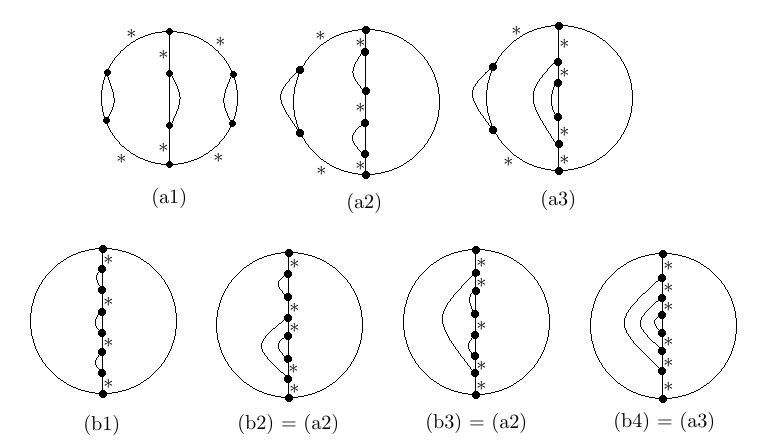}
\caption{Ear additions with four ears.  The graphs show all ways to add a fourth ear to a graph in Figure \ref{3-ears} without creating a $\bar{K}_4$ subgraph.  An asterisk $*$ denotes a path that may have length 0.}
\label{fourears}
\end{center}
\end{figure}

The only way to get a cycle count of 16 is to add a fifth ear to a 4-ear graph with fewer than 16 cycles, but the only count less than 16 is 15, and according to Lemma \ref{extrem}(b, c) a fifth ear adds far more cycles than the 1 that could give a total of 16.  

This proves that 16 cycles are impossible of attainment.  We have also seen that 17, 18, and 19 are possible.

\section{Proof for numbers beyond 89}\label{s:biguns}

We prove this by a general connection between trees and cycle counts.  We construct a bijection between the subtrees of a tree and a graph that has one cycle for every subtree, and we apply a theorem of Czabarka, Sz\'ekely, and Wagner \cite{csw}.

A graph is \emph{outerplanar} if it has a plane embedding with all vertices on the outer face.  The \emph{inner dual} of an inseparable outerplanar graph is the planar
dual without the outside face. It is well known that the
inner dual is a tree. The following converse is also true:

\begin{lemma}\label{inner}
   Let $T$ be a tree with at least two vertices.
   Then there is an inseparable outerplanar graph whose
   inner dual is isomorphic to $T$.
\end{lemma}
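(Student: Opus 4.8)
The plan is to induct on the number of vertices $n=|V(T)|$, proving the slightly stronger statement that $T$ can be realized by a \emph{good} graph, meaning an inseparable outerplanar graph $G$ with inner dual $T$ in which \emph{every} bounded face has at least one edge on the outer boundary. This strengthening is what makes the induction run: without it, a newly needed face might have to be attached to a face that has already been completely surrounded by other bounded faces, leaving no outer edge along which to attach. (This really can happen for an arbitrary realization: triangulating a hexagon $123456$ by the three chords $13,35,51$ leaves the central triangle $135$ with all three edges internal and none on the outer boundary.)

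For the base case $n=2$ I would take $G=\Theta_3$, the graph of three internally disjoint paths between two vertices. It is inseparable and outerplanar, its two bounded faces share the middle path, and each bounded face also carries one of the outer paths on the outer boundary; hence its inner dual is $K_2$ and it is good. For the inductive step, let $\ell$ be a leaf of $T$ with neighbor $p$, and set $T'=T-\ell$, a tree on $n-1\ge 2$ vertices. By the induction hypothesis there is a good graph $G'$ with inner dual $T'$. Let $F_p$ be the bounded face of $G'$ corresponding to $p$; by goodness it has an edge $e=xy$ on the outer boundary. I would subdivide $e$ into two edges $e_1=xz$ and $e_2=zy$ and then add a new ear joining $x$ and $z$ through the outer region, so that the new ear together with $e_1$ bounds a new face $F_\ell$. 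Because adding an ear to an inseparable graph keeps it inseparable, and because the ear (with $x,z$ adjacent on the outer boundary cycle) can be routed so as to enclose only $e_1$, the resulting graph $G$ is again inseparable and outerplanar with all vertices on the outer face.

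Finally I would verify that $G$ realizes $T$ and is good. Subdividing $e$ changes no face adjacency, so immediately before the ear is added the inner dual is still $T'$. The new face $F_\ell$ shares exactly the edge $e_1$ with $F_p$ and meets no other bounded face, so the inner dual gains precisely one new vertex $\ell$ adjacent to $p$, yielding $T$. Goodness is preserved as well: $F_\ell$ has the new ear on the outer boundary, $F_p$ retains the outer edge $e_2$, and every other face is untouched.

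The one point demanding care — and the main obstacle — is exactly this bookkeeping of outer-boundary edges. One must guarantee that the attachment never strands the face it is built on, which is why $e$ is subdivided (preserving the spare outer edge $e_2$) \emph{before} the ear is added, and why the invariant is stated for all bounded faces rather than merely asserting existence of \emph{some} good attachment site at each step.
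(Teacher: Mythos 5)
Your proof is correct in substance, but it follows a genuinely different route from the paper's. The paper does not build the graph face by face: it draws $T$ itself inside a circle and, for each edge of $T$, inserts a chord of the circle crossing that edge exactly once and intersecting nothing else; the only induction needed is to show that such pairwise disjoint chords exist, again by removing a leaf. The chords then cut the disk into regions that correspond bijectively to the vertices of $T$, and two regions share a chord exactly when the corresponding tree vertices are adjacent, so the inner dual is $T$ immediately, with no bookkeeping about which faces touch the outer boundary. Your approach instead carries the full statement through a leaf-removal induction on realizing graphs, and the strengthened invariant (every bounded face retains an edge on the outer boundary) together with the subdivision trick is exactly what is needed to make that induction close; your hexagon example correctly identifies why the naive induction hypothesis would fail. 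Both arguments are sound; the paper's is shorter because the crossing condition does the combinatorial work for free, while yours is more hands-on but self-contained.

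Two remarks. First, your base case is stated too loosely: $\Theta_3$ is outerplanar only when the shared middle path is a single edge. If all three paths have length at least $2$ the graph contains $K_{2,3}$ (indeed equals it when all lengths are $2$), which is one of the forbidden graphs for outerplanarity, since the internal vertices of the middle path cannot lie on the outer face. Since the paper allows multigraphs, you can take three parallel edges; in the simple setting, take a $4$-cycle with one chord. Second, the paper's construction has a payoff your version lacks: choosing the chords with pairwise distinct endpoints makes the resulting graph cubic and Hamiltonian, which the paper exploits later in proving Theorem~\ref{cubic}. In your construction the ear is attached at the pre-existing vertex $x$, whose degree can grow beyond $3$ over repeated steps; attaching each new ear at two fresh subdivision vertices of $e$ instead (subdivide $e$ twice and join the two new divalent vertices) would recover the cubic refinement while preserving your goodness invariant.
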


\begin{proof}
   Draw $T$ in the plane surrounded by a circle. For each edge
   $e$ of $T$, add a chord to the circle that crosses $e$ once but
   does not otherwise intersect $T$ or another chord (neither internally nor at an endpoint).
   This is always possible, by induction: let $T$ be a tree; remove a leaf from $T$, apply the
   induction hypothesis to add chords, then return the leaf and
   add one more chord that crosses the leaf edge.
\end{proof}

Figure~\ref{cyclegraph} shows an example.
By choosing chords without common endpoints we have obtained
an inseparable outerplanar (and therefore Hamiltonian) cubic graph.

\begin{figure}[htb]
 \centering
 \includegraphics[scale=1.0]{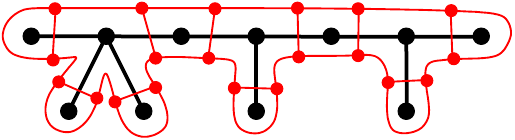}
 \caption{An outerplanar graph (thin red lines) and the tree (heavy black lines) which is its inner dual.}
 \label{cyclegraph}
\end{figure}

\begin{theorem}\label{prop:cor}
   Let $G$ be an inseparable outerplanar graph and let $T$ be its inner dual.
   Then the number of cycles in $G$ equals the number of subtrees of $T$.
\end{theorem}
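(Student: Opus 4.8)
The plan is to establish a bijection between the cycles of $G$ and the subtrees of its inner dual $T$, and to show this bijection respects the counting we want. The key structural fact is that in an inseparable outerplanar graph, every cycle bounds a union of inner faces that forms a connected region, and each inner face corresponds to exactly one vertex of $T$. So the first step is to make precise the correspondence between subsets of inner faces and cycles: a cycle $C$ in $G$ separates the plane into an inside and an outside, and the inner faces lying inside $C$ form a collection of vertices of $T$. I would argue that this collection induces a connected subgraph of $T$, hence a subtree, because the union of the inner faces inside $C$ is a topological disk (the closed region bounded by $C$), and adjacency of faces in $T$ corresponds exactly to sharing an edge of $G$; a disconnection in $T$ would force the region to be disconnected, contradicting that it is bounded by a single cycle.

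Second, I would verify the reverse direction: given any subtree $S$ of $T$ (including single-vertex subtrees), the union of the corresponding inner faces is a closed topological disk whose boundary is a single cycle of $G$. Here the outerplanarity is essential—because all vertices lie on the outer circle, the boundary of any connected union of inner faces is a single closed walk, and inseparability (2-connectivity) guarantees this walk is in fact a cycle with no repeated vertices. The single-vertex subtrees correspond to the bounded faces themselves (the smallest cycles), which matches the convention that subtrees of $T$ include the trivial one-vertex subtrees, so the count of subtrees starts at the number of faces and the cycle count starts at the number of bounded-face boundaries.

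Third, I would check that the two maps are mutually inverse, which is essentially automatic once both directions are set up: sending $C$ to its enclosed face-set and then taking the boundary returns $C$, and sending a subtree $S$ to its boundary cycle and then collecting enclosed faces returns $S$, because the enclosed faces of the boundary of a disk are exactly the faces comprising the disk. This gives a bijection, and since it is a bijection between the full set of cycles and the full set of subtrees, the two counts coincide, proving $c(G)$ equals the number of subtrees of $T$.

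The main obstacle I anticipate is the topological claim that a connected union of inner faces is a disk whose boundary is a single cycle, and conversely that the faces inside a cycle induce a \emph{connected} subgraph of $T$. This is intuitively clear from the planar picture but requires care: I must rule out ``holes'' (an enclosed region containing the outer face, which cannot happen since all vertices are on the outer face and each inner face touches the outer boundary), and I must ensure the boundary walk does not repeat a vertex (which is where 2-connectivity enters, via the fact that a repeated vertex would be a cutpoint). I would handle this cleanly by using the tree structure of $T$ directly—inducting on the number of faces in the subtree, adding one leaf face at a time and checking that gluing a single face along a shared edge preserves the disk property and extends the boundary cycle—rather than appealing to general Jordan-curve machinery.
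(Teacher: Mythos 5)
Your overall plan coincides with the paper's: both set up the correspondence cycle $\leftrightarrow$ set of enclosed inner faces $\leftrightarrow$ subtree, and both prove the subtree-to-cycle direction by ordering the subtree so that each new face is adjacent to exactly one earlier face and gluing one face at a time (the paper phrases the gluing as a set sum, i.e.\ symmetric difference, of face boundaries rather than a union of disks, but it is the same induction). However, two of your supporting justifications fail as stated. For the forward direction you argue that if the faces inside a cycle induced a disconnected subgraph of $T$, then the enclosed region would be disconnected. That implication is false: two inner faces can meet at a vertex without sharing an edge, so a dual-disconnected family of faces can still have connected union. Concretely, take the hexagon $123456$ with chords $13$, $14$, $15$: the inner faces $123$ and $145$ meet only at the vertex $1$. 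The clean repair is the paper's one-line argument: a cycle of $G$ together with its interior is itself an inseparable outerplanar graph (no vertex of $G$ lies strictly inside any cycle, since all vertices are on the outer face), so its inner dual is a tree --- in particular connected --- and that inner dual is exactly the subgraph of $T$ induced by the enclosed faces.

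The second problem is in your inductive gluing step, where you claim a repeated boundary vertex is impossible because ``a repeated vertex would be a cutpoint.'' That is not the right mechanism. The danger is a pinch: the new leaf face could meet the current disk at a vertex $v$ off the glued edge, in which case the boundary becomes a figure-eight rather than a cycle; and such a $v$ need not be a cutpoint of $G$ (in the fan example above, $123$ and $145$ meet at $1$, which is not a cutpoint of that $2$-connected graph). What actually excludes the pinch is the tree structure of $T$: in a $2$-connected plane graph the inner faces incident with a fixed vertex $v$ form a path in the inner dual, with consecutive faces sharing an edge through $v$. Hence if the new face $f$ and some face $g$ of the current subtree both contained $v$, the unique $T$-path from $f$ to $g$ would consist of faces containing $v$; this path lies inside the subtree and must leave $f$ through its unique neighbour there, across the unique glued edge, forcing that edge to be incident with $v$ --- a contradiction. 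Some argument of this kind is needed before one may assert that gluing along a single edge keeps the boundary a cycle; to be fair, the paper's own proof is also silent at exactly this point, but your proposed cutpoint argument would not close the gap.
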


\begin{proof}
   A cycle of $G$ together with its interior is an outerplanar graph,
   so its inner dual is a subtree of $T$. We show that this gives a
   bijection.  Every cycle in a planar graph is the set sum (the symmetric difference) of the
   boundary edges of the faces inside it. Let $S$ be a subtree of $T$ and consider the set sum
   of the corresponding face boundaries of $G$. The vertices of $S$ can be ordered
   so that each vertex apart from the first is adjacent to exactly one
   previous vertex. Performing the set sum of the corresponding face boundaries
   of $G$ in that order, each face apart from the first shares an edge
   with only one of the previous faces, and that edge is unique.  This
   means we have a cycle at every stage, and moreover the final cycle
   with its interior has $S$ as its inner dual.
\end{proof}

Now we return to the proof of the main theorem.  
It was shown in \cite{csw} that every integer other than 2, 4, 5, 7, 8, 9, 12, 13, 14, 16, 18, 19, 22, 23, 26, 27, 29, 31, 33,
35, 38, 39, 42, 43, 46, 50, 52, 54, 60, 65, 68, 72, 77 and 89 counts the number of subtrees of a tree.  (Cf.\ \cite[Sequence A184164]{oeis}.)
By Theorem~\ref{prop:cor}, every positive integer other than those (and 1) is the number of cycles in an inseparable graph.

\section{Proof for the missing numbers up to 89}

\newcommand{\littlefig}[5]{\parbox{2.5cm}{
 \leavevmode%
 \hbox{\hss\raisebox{#1}{\includegraphics[scale=#2]{#3.pdf}}\hss}
 \\[0.5ex]\centerline{\raisebox{#4}{$c=#5$\quad}}}}

\begin{figure}[ht!]
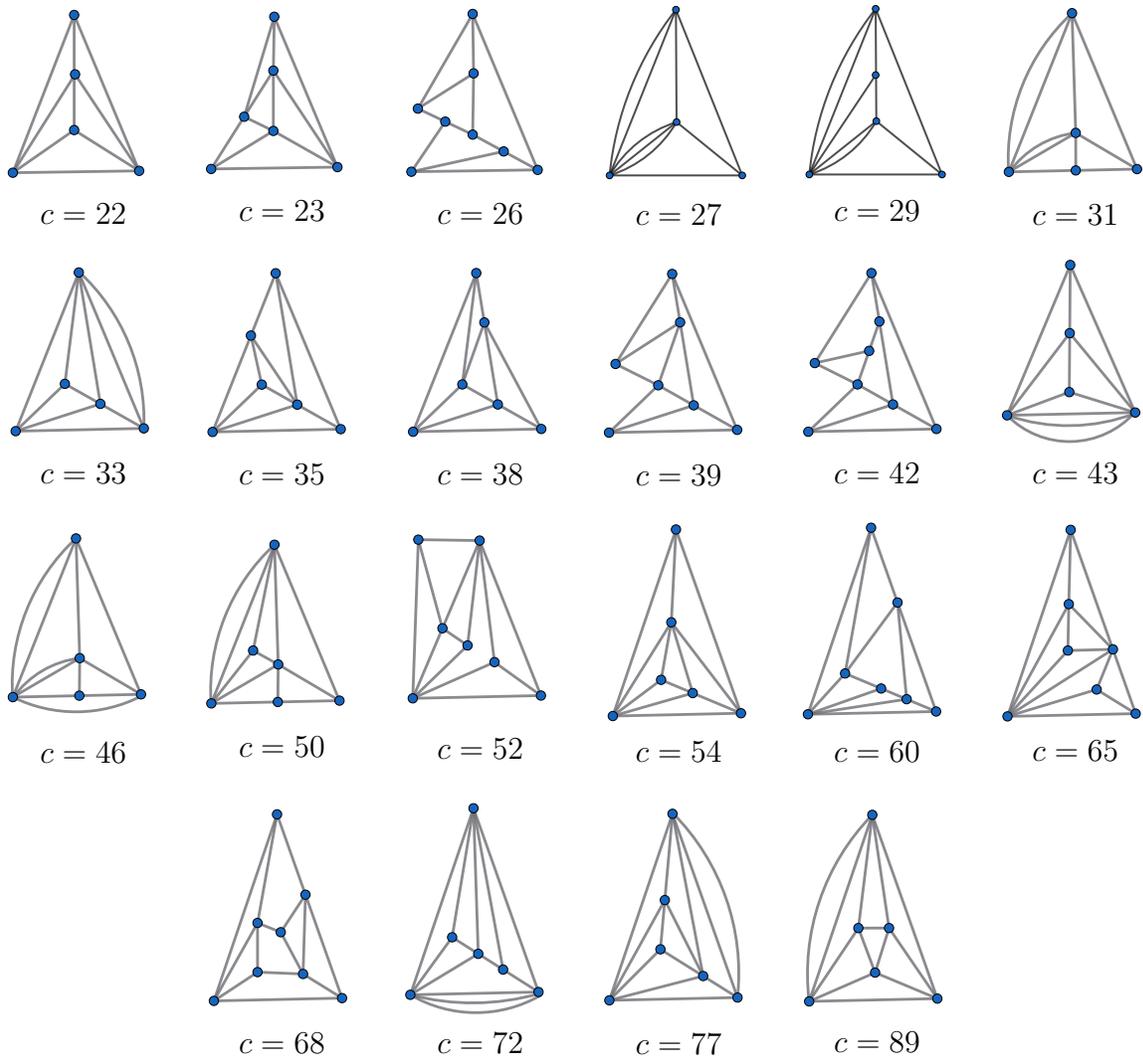

\centering

\centerline{
  \littlefig{0cm}{.35}{geogebra-export-11-22}{0cm}{22}
  \littlefig{0cm}{.35}{geogebra-export-12-23}{0cm}{23}
  \littlefig{0cm}{.35}{geogebra-export-13-26}{0cm}{26}
  \littlefig{0cm}{.25}{geogebra-export-14-27}{0cm}{27}
  \littlefig{0cm}{.25}{geogebra-export-15-29}{0.1cm}{29}
  \littlefig{0cm}{.35}{geogebra-export-16-31}{0cm}{31}
}
\bigskip  
\centerline{
  \littlefig{0cm}{.35}{geogebra-export-17-33}{0cm}{33}
  \littlefig{0cm}{.35}{geogebra-export-18-35}{0cm}{35}
  \littlefig{0cm}{.35}{geogebra-export-21-38}{0cm}{38}
  \littlefig{0cm}{.35}{geogebra-export-22-39}{0cm}{39}
  \littlefig{0cm}{.35}{geogebra-export-23-42}{0cm}{42}
  \littlefig{0cm}{.35}{geogebra-export-24-43}{0.1cm}{43}
}
\bigskip
\centerline{
  \littlefig{0cm}{.35}{geogebra-export-25-46}{0cm}{46}
  \littlefig{0cm}{.35}{geogebra-export-26-50}{0cm}{50}
  \littlefig{0cm}{.35}{geogebra-export-27-52}{-0.1cm}{52}
  \littlefig{0cm}{.35}{geogebra-export-31-54}{0.1cm}{54}
  \littlefig{0cm}{.35}{geogebra-export-32-60}{0.1cm}{60}
  \littlefig{0cm}{.35}{geogebra-export-33-65}{0.1cm}{65}
}
\bigskip
\centerline{
  \littlefig{0cm}{.35}{geogebra-export-34-68}{0cm}{68}
  \littlefig{0cm}{.35}{geogebra-export-35-72}{0.1cm}{72}
  \littlefig{0cm}{.35}{geogebra-export-36-77}{0cm}{77}
  \littlefig{0cm}{.35}{geogebra-export-37-89}{0cm}{89}
}
\medskip
\caption{Graphs for the 22 exceptional cycle counts from 22 to 89.\label{missing}}
\end{figure}

A computer search using \texttt{nauty}~\cite{nauty} showed that all numbers from 17 up to 100 are cycle counts.  (For computer-friendly data see~\cite{data} under the heading ``Graphs with given cycle counts''.)
We present graphs with the missing cycle counts 22, 23, 26, 27, 29, 31, 33, 35, 38, 39, 42, 43, 46, 50, 52, 54, 60, 65, 68, 72, 77, and 89 in Figure \ref{missing}.  That concludes the proof of Theorem~\ref{conjecture}.

\section{Proof of Theorem~\ref{cubic}}

We first need to show that 13 cycles are not possible.
To have more than 3 cycles, an ear has to be added to the theta graph.  Those graphs are of type $\Theta_4'$ or $\bar{K}_4$ (see Figure~\ref{2-ears}).  The single-ear extensions of $\bar{K}_4$ are shown in Figure~\ref{K4ear}; the only one containing 13 cycles is a $\bar{W}_4$, which is not cubic.  Adding a fourth ear will give 18 or more cycles.

The single-ear extensions of $\Theta_4'$ have either 10 or 11 cycles, and adding another ear will yield either 15, 17, or 20 cycles (see Figure 4), and hence 13 cycles are never achieved in this case.

The construction in Section~\ref{s:biguns} produces inseparable, planar cubic graphs for cycle counts not listed in~\cite{csw} (\cite[Sequence A184164]{oeis}), as we noted there.
This leaves us with the counts listed in~\cite{csw} apart from 1, 2, 4, 5, 8, 9, 13, and 16.
A computer search using \texttt{plantri}~\cite{plantri} found planar cubic graphs for these cases, with up to 14 vertices.
These can be found at~\cite{data}. 
An example which requires 14 vertices is shown in Figure~\ref{cubic68}.

\begin{figure}[htb]
\centering
 \includegraphics[scale=0.7]{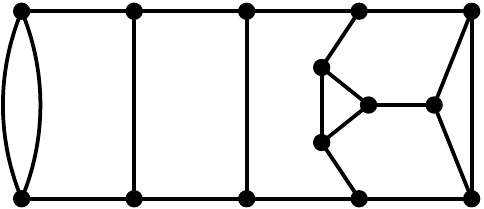}
 \caption{One of the smallest planar cubic graphs with 68 cycles.}
 \label{cubic68}
\end{figure}
 
\section{Consequences}

Curiously, all the graphs we use to generate the cycle counts are planar (except $K_{3,3}$, but it has the planar alternate $\Theta_5$).  They are also Hamiltonian (those in the top row of Figure~\ref{fourears} are not needed for their cycle counts).  That means Theorem \ref{conjecture} is valid for the restricted class of Hamiltonian planar graphs.  

\begin{corollary}
Every positive integer is the cycle count of some inseparable Hamiltonian planar graph, except for $2$, $4$, $5$, $8$, $9$, and $16$.
\end{corollary}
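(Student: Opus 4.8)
The plan is to observe that the negative part of the statement is inherited from Theorem~\ref{conjecture} at no cost, and that every witnessing graph used in its positive part can be taken to be Hamiltonian and planar.

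For the negative direction, inseparable Hamiltonian planar graphs form a subclass of all inseparable graphs. Hence any integer that is not a cycle count of \emph{any} inseparable graph is, a fortiori, not a cycle count within this smaller class, so Theorem~\ref{conjecture} immediately rules out $2,4,5,8,9$, and $16$.

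For the positive direction I would dispose of all but finitely many counts in one stroke using the construction of Section~\ref{s:biguns}. The graph produced in Lemma~\ref{inner} is outerplanar, hence planar; and since it is inseparable, its outer boundary is a single cycle through every vertex, that is, a Hamiltonian cycle. (Concretely, the vertices are exactly the endpoints of the chords on the surrounding circle, and that circle is the Hamiltonian cycle.) By Theorem~\ref{prop:cor} this realizes, by a Hamiltonian planar graph, every integer that counts the subtrees of some tree, i.e.\ every positive integer outside the finite list of \cite{csw}. The remaining counts are $1$ together with the entries of that list; discarding the six genuine exceptions leaves $1,7,12,13,14,18,19$ and the twenty-two values $22,23,\dots,89$ to be handled by hand. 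For $1$ we use a single cycle, for $7,12,13,14,18,19$ the small graphs of Figures~\ref{2-ears}, \ref{10-17}, and~\ref{K4ear}, and for the remaining twenty-two the graphs of Figure~\ref{missing}. Note that $15$ is not in the \cite{csw} list, so it is already covered by the outerplanar construction and the non-planar witness $\bar{K}_{3,3}$ need never be used (equivalently, $\bar{K}_{3,3}$ could be replaced by the planar multigraph $\Theta_6$, which likewise has $\binom{6}{2}=15$ cycles and is Hamiltonian).

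The one point requiring care is that Hamiltonicity, unlike the cycle count, is not invariant under subdivision: a single homeomorphism class can contain both Hamiltonian and non-Hamiltonian members. Thus, for each of the finitely many explicit witnesses I must select a representative in its homeomorphism class that genuinely carries a Hamiltonian cycle---for instance the multigraph $\Theta_k$ (two vertices joined by $k$ parallel edges), which is planar and Hamiltonian via a two-edge cycle, rather than a subdivision that stretches a third path and destroys Hamiltonicity. With representatives so chosen, verifying that each listed witness is drawn in the plane and admits a Hamiltonian cycle is a routine finite inspection; there is no deeper obstacle, which is precisely why the statement is a corollary rather than a theorem.
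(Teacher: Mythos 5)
Your proposal is correct and takes essentially the same route as the paper: the exceptions are inherited from Theorem~\ref{conjecture}, and the positive part re-uses that theorem's witnesses, observing that the outerplanar graphs of Section~\ref{s:biguns} are automatically planar and Hamiltonian (via the outer cycle) and that the finitely many pictured witnesses, with suitable representatives of their homeomorphism classes, can be checked directly. One small bonus: your planar replacement $\Theta_6$ for $\bar{K}_{3,3}$ (with $\binom{6}{2}=15$ cycles) is actually more accurate than the paper's own text, which names $\Theta_5$ (only $\binom{5}{2}=10$ cycles), evidently a slip.
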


A different kind of consequence is the following.

\begin{corollary}
The only inseparable graphs with fewer than $24$ cycles that are not planar are $K_{3,3}$, with $15$ cycles, and its subdivisions.
\end{corollary}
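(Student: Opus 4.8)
The plan is to reduce the statement to Kuratowski's theorem and then apply the isotonicity and extremality tools already in hand. Recall that a graph fails to be planar exactly when it contains a subdivision of $K_5$ or of $K_{3,3}$, and that the cycle count is a homeomorphism invariant, so a subdivision of $K_5$ carries $c(K_5)=37$ cycles and a subdivision of $K_{3,3}$ carries $c(K_{3,3})=15$. The idea is that the $K_5$ subdivisions are already far too rich in cycles, while the $K_{3,3}$ subdivisions sit right at the boundary: any strictly larger inseparable graph containing one must jump to at least $24$ cycles.

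First I would dispose of $K_5$. If an inseparable non-planar graph $G$ contains a subdivision $\bar{K}_5$, then $\bar{K}_5$ is a subgraph, hence a minor, of $G$, so Proposition~\ref{minors}(a) gives $c(G)\ge c(\bar{K}_5)=37>24$. Consequently every non-planar $G$ with fewer than $24$ cycles must be free of $K_5$ subdivisions and therefore, by Kuratowski's theorem, must contain a subdivision of $K_{3,3}$.

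Next, and this is the heart of the argument, I would show that a subdivision of $K_{3,3}$ is as large as such a graph can be. Suppose $\bar{K}_{3,3}\subseteq G$ with the inclusion proper. Since $G$ is inseparable and contains the inseparable subgraph $\bar{K}_{3,3}$, we may pass from $\bar{K}_{3,3}$ to $G$ by successively adding ears; let $H$ be the result of adding just the first ear. By the Extremality Lemma~\ref{extrem}(b), the fewest cycles are created when that ear joins two multivalent vertices, that is, two of the six branch vertices of $K_{3,3}$, and by the Ear-Path Lemma~\ref{ear-path} the number of new cycles is then the number of paths in $K_{3,3}$ between those two vertices. A direct count gives $9$ such paths for a same-part pair ($3$ of length $2$ and $6$ of length $4$) and also $9$ for an opposite-part pair ($1$ of length $1$, $4$ of length $3$, and $4$ of length $5$). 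Hence $c(H)\ge 15+9=24$, and since $H\subseteq G$, Proposition~\ref{minors}(a,b) yields $c(G)\ge 24$, contradicting our hypothesis. Thus the inclusion cannot be proper, so $G$ is itself a subdivision of $K_{3,3}$, with exactly $15$ cycles.

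The one place demanding care is the path count in the last step: one must verify that both kinds of vertex pair in $K_{3,3}$ give the same value $9$, so that the minimum guaranteed by the Extremality Lemma really is $9$ rather than something smaller, making $15+9=24$ the exact threshold. Once that is checked, the remainder is a routine combination of Kuratowski's theorem with Proposition~\ref{minors} and Lemmas~\ref{ear-path} and~\ref{extrem}.
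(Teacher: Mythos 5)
Your proof is correct and follows essentially the same route as the paper's: Kuratowski's Theorem, the value $c(K_5)=37$, and the bound $c(\bar{K}_{3,3}+e)\geq 15+9=24$ obtained from Lemma~\ref{extrem}(b) together with the Ear-Path Lemma. The only difference is that you spell out details the paper leaves implicit, namely the ear-decomposition step extending $\bar{K}_{3,3}$ to $G$ and the verification that both orbit types of vertex pairs in $K_{3,3}$ admit exactly $9$ connecting paths.
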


\begin{proof}
Apply Kuratowski's Theorem along with the following facts: $c(K_5) = 37$, $c(K_{3,3}) = 15$, and with one ear $e$ added to a subdivision we have $c(\bar{K}_{3,3}+e) \geq 24$.  The latter follows from the fact in Lemma \ref{extrem}(b) that adding an edge to $\bar{K}_{3,3}$ gives the fewest new cycles (9, to be exact) if the edge joins two multivalent vertices.
\end{proof}

\section{Conjectures}\label{s:conjectures}

We are not aware of solutions for other classes of graphs.
To motivate such research, we present the results of some computations.
In all cases, it is our conjecture, in the absence of proof, that there are no more missing cycle counts.
\begin{itemize}
  \item[(a)] 3-connected simple graphs. Cycle counts 1--6, 8--12, 16--20, 27, 30, 
    32--34 are missing.  All other counts up to 3000 do occur.
  \item[(b)] 2-connected simple cubic graphs. Cycle counts 1--6, 8--13, 23--25, 27,
   31--37, 40, 43--45, 48, 58--59, 62, 64, 68, 75 and 115 are missing.
   All other counts up to 4000 do occur.
  \item[(c)] 3-connected simple cubic graphs. Cycle counts 1--6, 8--13, 16--25, 27,
  30--45, 48--49, 58--78, 80, 82, 88, 92, 103, 108, 110--132, 134--136, 138--139,
  142--143, 150, 162, 195, 203, 208, 210--212, 214--220, 222--227, 230--233,
  235, 238--239, 243, 247, 251, 255, 403, 407, 411, 415, 419, 423, 427 and 459
  are missing.  All other counts up to 3000 do occur.
  \item[(d)] 2-connected planar cubic graphs.  In addition to the missing counts
  in item (b), this classes misses cycle counts 15--21, 29, 51, 54, 56, 67, 70, 78,
  86, 90, 107, 112, 123 and 131. All other cycle counts up to 4000 do occur.
  \item[(e)] 3-connected planar cubic graphs.  In addition to the missing counts in
  item (d), this class misses cycle counts 15, 29, 51, 53--57, 86, 90, 93, 85, 97--102,
  and many larger counts including 1182. All cycle counts from 1183 to 4000 do occur.
\end{itemize}

\section{Declarations}

McKay was supported by Australian Research Council grant DP250101611.  No other funding supported the preparation of this manuscript.

The authors have no competing interests.

Data for inseparable planar graphs of order at most 8 having all cycle counts up to 100 are available at \cite{data} under the heading ``Graphs with given cycle counts''.
Only distributed software was used for this project.
Graph generators were \texttt{geng} for general graphs~\cite{nauty},
\texttt{plantri} for planar graphs~\cite{plantri} and \texttt{snarkhunter} for cubic graphs~\cite{snarkhunter}.
The tool \texttt{countg} was used for cycle counting~\cite{nauty}.



\end{document}